\newtheorem{theorem}{Theorem}
\newtheorem{lemma}{Lemma}
\newtheorem{proposition}{Proposition}
\newtheorem{corollary}{Corollary}
\newtheorem{claim}{Claim}
\newcommand{\E}{\mathbb E}
\newcommand{\F}{\mathbb F}
\newenvironment{proof}
      {\medskip\noindent{\bf Proof:}\hspace{1mm}}
      {\hfill$\Box$\medskip}
\begin{document}

\begin{frontmatter}[classification=text]

\title{Random Multilinear Maps\\ and the Erd\H{o}s Box Problem} 

\author[david]{David Conlon\thanks{Research supported by NSF Award DMS-2054452.}}
\author[cosmin]{Cosmin Pohoata}
\author[dima]{Dmitriy Zakharov\thanks{Research supported by a grant of the Russian Government N 075-15-2019-1926.}}

\begin{abstract}
By using random multilinear maps, we provide new lower bounds for the Erd\H{o}s box problem, the problem of estimating the extremal number of the complete $d$-partite $d$-uniform hypergraph with two vertices in each part, thereby improving on work of Gunderson, R\"{o}dl and Sidorenko.
\end{abstract}
\end{frontmatter}

\section{Introduction}

Writing $K_{s_{1},\ldots,s_{d}}^{(d)}$ for the complete $d$-partite $d$-uniform hypergraph with parts of orders $s_{1},\ldots,s_{d}$, the extremal number $\operatorname{ex}_{d}(n,K_{s_{1},\ldots,s_{d}}^{(d)})$ is the maximum number of edges in a $d$-uniform hypergraph on $n$ vertices containing no copy of $K_{s_{1},\ldots,s_{d}}^{(d)}$. Already for $d = 2$, the problem of determining these extremal numbers is one of the most famous in combinatorics, known as the Zarankiewicz problem. 
The classic result on this problem, due to K\H{o}v\'ari, S\'os and Tur\'an \cite{KST54}, says that
$$\operatorname{ex}_2(n,K_{s_{1},s_{2}}) = O\left(n^{2-1/s_{1}}\right)$$
for all $s_1 \leq s_2$. However, this upper bound has only been matched by a construction with $\Omega(n^{2-1/s_{1}})$ edges when $s_2 > (s_1-1)!$, a result which, in this concise form, is due to Alon, Koll\'ar, R\'onyai and Szab\'o~\cite{ARSz99, KRSz96}, but builds on a long history of earlier work on special cases (see, for example, the comprehensive survey~\cite{FS13}).

Generalizing the K\H{o}v\'ari--S\'os--Tur\'an bound, Erd\H{o}s \cite{Erd64} showed that
\begin{equation} \label{erdos}
\operatorname{ex}_{d}(n,K_{s_{1},\ldots,s_{d}}^{(d)}) = O\left(n^{d - \frac{1}{s_{1}\cdots s_{d-1}}}\right)
\end{equation}
for all $s_{1} \leq s_{2} \leq \ldots \leq s_{d}$. An analogue of the Alon--Koll\'ar--R\'onyai--Szab\'o result, due to Ma, Yuan and Zhang \cite{MYZ18}, is also known in this context and says that~\eqref{erdos} is tight up to the constant provided that $s_{d}$ is sufficiently large in terms of $s_{1},\ldots,s_{d-1}$. The proof of this result is based on an application of the random algebraic method, introduced by Bukh \cite{Buk15} and further developed in \cite{BC18} and \cite{Con19}.

Our concern then will be with determining the value of $\operatorname{ex}_{d}(n,K_{s_{1},\ldots,s_{d}}^{(d)})$ in the particular case when $s_{1}=\cdots=s_{d}=2$. In the literature, this problem, originating in the work of Erd\H{o}s~\cite{Erd64}, is sometimes referred to as the \emph{box problem}, owing to a simple reformulation in terms of finding the largest subset of the grid $\{1, 2, \dots, n\}^d$ which does not contain the vertices of a $d$-dimensional box (see also~\cite{KKM02} for a connection to a problem in analysis). By~\eqref{erdos}, we have
\begin{equation} \label{erdos2}
\operatorname{ex}_{d}(n,K_{2,\ldots,2}^{(d)}) = O\left(n^{d - \frac{1}{2^{d-1}}}\right).
\end{equation}
While in the case $d=2$ it has long been known that $\operatorname{ex}_2(n,K_{2,2})=\Theta(n^{3/2})$, with a matching construction due to Klein~\cite{Erd38} even predating the K\H{o}v\'ari--S\'os--Tur\'an bound, there has been very little success in finding constructions matching~\eqref{erdos2} for $d \geq 3$. Indeed, it is unclear whether they should even exist. For $d=3$, the best available construction is due to Katz, Krop and Maggioni~\cite{KKM02}, who showed that $\operatorname{ex}_{3}(n,K_{2,2,2}^{(3)}) =\Omega(n^{8/3})$. For general $d$, there is a simple, but longstanding, lower bound
\begin{equation} \label{deletion}\operatorname{ex}_{d}(n,K_{2,\ldots,2}^{(d)}) = \Omega\left(n^{d - \frac{d}{2^d-1}}\right)
\end{equation}
coming from an application of the probabilistic deletion method. Besides the Katz--Krop--Maggioni construction, the only improvement to this bound is an elegant construction of Gunderson, R\"{o}dl and Sidorenko~\cite{GRS99}, which amplified the deletion argument by introducing algebraic structure on one of the sides of the $d$-partition and using random hyperplanes to define the edges.

\begin{theorem}[Gunderson--R\"{o}dl--Sidorenko] \label{GRS}
For any $d \geq 2$, let $s=s(d)$ be the smallest positive integer $s$ (if it exists) such that $(sd-1)/(2^{d}-1)$ is an integer. Then
\begin{equation*}
\operatorname{ex}_{d}(n,K_{2,\ldots,2}^{(d)}) = \Omega\left(n^{d - \frac{d-1/s}{2^d-1}}\right).
\end{equation*}
\end{theorem}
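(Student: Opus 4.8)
The plan is to prove the bound by a random multilinear map, which is the algebraic form of ``random hyperplanes on one side'' and underlies the paper's method. Fix a prime power $q$, put $n=q^s$, and let each of the $d$ parts be $V_i=\F_q^s\setminus\{0\}$. Set $k=(sd-1)/(2^d-1)$, a positive integer exactly by the hypothesis on $s$, and choose a uniformly random multilinear map $f\colon(\F_q^s)^d\to\F_q^k$; make $(v_1,\dots,v_d)$ an edge iff $f(v_1,\dots,v_d)=0$. Holding $v_1,\dots,v_{d-1}$ fixed, the condition on the last coordinate is that $w$ lie in the codimension-$k$ subspace $\ker\bigl(w\mapsto f(v_1,\dots,v_{d-1},w)\bigr)$, a ``random hyperplane'' arrangement on $V_d$ whose dependence on the remaining coordinates is precisely the imposed algebraic structure. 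A first-moment computation gives $\E[\#\text{edges}]=(1+o(1))\,q^{sd-k}=(1+o(1))\,n^{d-k/s}$ (edges with a zero coordinate are negligible, since $k<s$), and a second-moment bound yields concentration, so a typical $f$ has at least $\tfrac12 n^{d-k/s}$ edges; since $k/s=(sd-1)/\bigl(s(2^d-1)\bigr)=(d-1/s)/(2^d-1)$, this is the target density.

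Next I would identify the copies. A copy of $K_{2,\ldots,2}^{(d)}$ is a choice of pairs $\{v_i,v_i'\}\subseteq V_i$ for $i\le d-1$ — a ``box'' $B$ — together with two common neighbours in $W(B)=\{w: f(v_1^{\varepsilon_1},\dots,v_{d-1}^{\varepsilon_{d-1}},w)=0 \text{ for all }\varepsilon\in\{0,1\}^{d-1}\}$. Each defining condition is linear in $w$, so $W(B)$ is the common zero set of $k\,2^{d-1}$ linear forms on $\F_q^s$. The entire point of the value of $k$ is the inequality $k\,2^{d-1}\ge s$, which one checks holds for this $k$: for a generic map and a box with the $v_i,v_i'$ linearly independent these forms have full rank $s$, so $W(B)=\{0\}$ and the box has no common neighbour in $V_d=\F_q^s\setminus\{0\}$. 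Hence a generic box supports no copy, and every copy must come from a \emph{degenerate} box, one at which the $k\,2^{d-1}$ forms drop rank, so that $\dim W(B)\ge1$ and $|W(B)|\ge q$.

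The crux — and the step I expect to be hardest — is to eliminate the degenerate boxes while discarding only a lower-order fraction of edges. A naive first moment is hopeless here: a single degenerate box carries $\binom{q}{2}$ copies, and rank drops are produced by atypical maps $f$ that degenerate many boxes simultaneously, so $\E[\#\text{copies}]$ overshoots the edge count by a factor of order $q$. The remedy is the random algebraic method, which controls the degenerate configurations not in expectation but geometrically: the pairs (map, box) with $\dim W(B)\ge1$ lie on a bounded-degree subvariety of configuration space, and for a typical $f$ one bounds the number of edges lying in some copy of $K_{2,\ldots,2}^{(d)}$ by Lang--Weil/dimension estimates rather than by a first moment; deleting exactly those edges yields a copy-free hypergraph and removes only $o(n^{d-k/s})$ of them. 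A convenient engine for these estimates is the reduction that, after fixing two distinct vertices $w\ne w'$ of the last part, the boxes $B$ with $w,w'\in W(B)$ are exactly the copies of $K_{2,\ldots,2}^{(d-1)}$ for the induced random multilinear map $\bigl(f(\cdot,w),f(\cdot,w')\bigr)\colon(\F_q^s)^{d-1}\to\F_q^{2k}$ to a doubled target, which passes from dimension $d$ with target $k$ to dimension $d-1$ with target $2k$ and drives an induction on $d$ controlling the degeneracy loci. Carrying this out leaves a $K_{2,\ldots,2}^{(d)}$-free hypergraph with $\Omega\!\bigl(n^{\,d-(d-1/s)/(2^d-1)}\bigr)$ edges, which is the assertion.
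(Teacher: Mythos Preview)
The paper does not give a separate proof of Theorem~\ref{GRS}; it is quoted from \cite{GRS99}, and the paper's own Theorem~\ref{main} (with $r=(sd-1)/(2^d-1)$, which satisfies $d(s-1)<(2^d-1)r$) recovers it as a special case. So the relevant comparison is with the proof of Theorem~\ref{main}. Your setup is essentially that proof, with two differences: you impose $f=0$ rather than $T_i=1$, and you analyse asymmetrically through the last coordinate. Your first-moment count of edges is fine, and you are right that deleting one edge per copy fails by exactly a factor $q$: with $k(2^d-1)=sd-1$ one has $\E[\#\text{copies}]\asymp q\cdot\E[\#\text{edges}]$.

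The gap is in how you propose to recover that factor. The paper does \emph{not} use Lang--Weil or any variety-counting induction; it uses a short combinatorial observation you are missing. Working with the normalisation $T_i=1$ (not $0$), Proposition~\ref{bra} shows that whenever all $2^d$ vertices of a box are edges, so is every point of the product of the $d$ affine lines $l_j$ through $v_j^0,v_j^1$. Hence the bad \emph{edges} $\mathcal B$ are covered by $|\mathcal L|$ products of affine lines, each of size $q^d$, while each such product contributes $q^d(q-1)^d$ ordered copies to $\mathcal F$; thus $|\mathcal B|\le q^d|\mathcal L|=(q-1)^{-d}|\mathcal F|$, giving a clean $q^{-d}$ gain that beats the missing factor $q$ for every $d\ge2$. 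This is Lemma~\ref{crux}, and it replaces your entire Lang--Weil/induction programme with three lines. Your choice $f=0$ also actively hurts: with the value $0$, collinear pairs $v_j^1=\lambda v_j^0$ are \emph{not} excluded (whereas with value $1$ they are, as in Claim~\ref{cl}), and these degenerate copies have only $2^{d-1}$ independent constraints, so their expected count is at least as large as the generic one and the box structure is no longer a clean partition into affine lines. Switching to $T_i=1$ and invoking the affine-line clustering is the missing idea; the random-algebraic route you sketch is neither needed nor, as stated, a proof.
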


It is easy to see that the number $s=s(d)$ exists precisely when $d$ and $2^{d}-1$ are relatively prime, which holds, for instance, when $d$ is a prime number or a power of $2$, but does not hold for many other numbers, such as $d=6$, $12$, $18$, $20$, $21$. In fact, their result fails to apply for a positive proportion of the positive integers, as may be seen by noting that if the condition $(d, 2^d - 1) = 1$ fails for a given $d$, then it also fails for all multiples of $d$.

In this paper, we improve on the lower bound from Theorem \ref{GRS} by establishing the following result, whose proof refines the method from \cite{GRS99} by introducing algebraic structure on each side of the $d$-partition and using random multilinear maps to define the edges.

\begin{theorem}\label{main}
For any $d \geq 2$, let $r$ and $s$ be positive integers such that $d(s-1) < (2^d - 1)r$. Then
$$
\operatorname{ex}_{d}(n,K_{2,\ldots,2}^{(d)}) = \Omega\left(n^{d - \frac{r}{s}}\right).
$$
\end{theorem}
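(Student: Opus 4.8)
The plan is to realise the bound through a single random multilinear map over a finite field, taking the vertex classes to be affine copies of a vector space (so that no two vertices are proportional), and then---crucially---to clear copies of $K_{2,\ldots,2}^{(d)}$ by deleting entire complete sub-hypergraphs rather than one edge at a time. Concretely, fix a prime power $q$ and set $n=q^s$. For each $i\in\{1,\ldots,d\}$ let $V_i$ be the affine chart $\{1\}\times\F_q^s\subseteq\F_q^{s+1}$, so that $|V_i|=n$ and no two vectors of $V_i$ are scalar multiples of each other. Choose $r$ independent uniformly random multilinear forms $\phi_1,\ldots,\phi_r\colon(\F_q^{s+1})^d\to\F_q$, write $\Phi=(\phi_1,\ldots,\phi_r)$, and declare $(x_1,\ldots,x_d)\in V_1\times\cdots\times V_d$ to be an edge exactly when $\Phi(x_1,\ldots,x_d)=0$. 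For any fixed tuple the tensor $x_1\otimes\cdots\otimes x_d$ is nonzero, so $\Phi(x_1,\ldots,x_d)$ is uniform on $\F_q^r$; hence each tuple is an edge with probability $q^{-r}$, and the expected number of edges is $q^{sd}\cdot q^{-r}=q^{sd-r}=n^{d-r/s}$.

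The key observation is that copies of $K_{2,\ldots,2}^{(d)}$ occur only in large clusters. A potential copy is specified by an unordered pair $\{a_i^0,a_i^1\}\subseteq V_i$ in each class; since these two affine points are distinct, they span a $2$-dimensional plane $\pi_i\subseteq\F_q^{s+1}$. All $2^d$ of its edges are present precisely when $\Phi$ vanishes on every $a_1^{\epsilon_1}\otimes\cdots\otimes a_d^{\epsilon_d}$, and these $2^d$ rank-one tensors form a basis of $\pi_1\otimes\cdots\otimes\pi_d$, a space of dimension exactly $2^d$ irrespective of the chosen pairs. Thus the copy is present iff $\Phi$ vanishes on all of $\pi_1\otimes\cdots\otimes\pi_d$; but in that case $\Phi$ also vanishes on $x_1\otimes\cdots\otimes x_d$ for every $x_i$ on the affine line $\ell_i=\pi_i\cap V_i$, so the entire complete $d$-partite sub-hypergraph on $\ell_1\times\cdots\times\ell_d$, a $K_{q,\ldots,q}^{(d)}$, is present. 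In short, every copy of $K_{2,\ldots,2}^{(d)}$ lives inside one of these rich blocks, indexed by the $d$-tuples of affine lines $(\ell_1,\ldots,\ell_d)$ on which $\Phi$ vanishes identically.

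To finish I would destroy all copies by deleting every edge lying in a rich block. Since $\Phi$ vanishes on the $2^d$-dimensional space $\pi_1\otimes\cdots\otimes\pi_d$ with probability $q^{-r2^d}$, and since $\F_q^s$ contains $\tfrac{q^s(q^s-1)}{q(q-1)}=\Theta(q^{2(s-1)})$ affine lines, the expected number of rich blocks is $\Theta(q^{2(s-1)d-r2^d})$. Each block contributes at most $q^d$ edges, so the expected number of deleted edges is $O(q^{2sd-d-r2^d})$. Comparing exponents, $(2sd-d-r2^d)-(sd-r)=d(s-1)-(2^d-1)r<0$ by hypothesis, so for $q$ large the expected number of deleted edges is at most half the expected number of edges. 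By linearity of expectation there is an outcome in which at least $\tfrac12 q^{sd-r}$ edges survive; the resulting hypergraph has $d\cdot q^s=\Theta(n)$ vertices, is free of $K_{2,\ldots,2}^{(d)}$, and has $\Omega(n^{d-r/s})$ edges, as required.

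The heart of the argument, and the point at which it beats deleting one edge per copy, is this clustering: because $\Phi$ is multilinear, forbidding the $2^d$ specified edges already forces an entire $K_{q,\ldots,q}^{(d)}$, so copies can be cleared block by block at an amortised cost of a single edge per $q^d$-edge block. This is exactly what replaces the naive condition $ds<(2^d-1)r$ by the weaker $d(s-1)<(2^d-1)r$. The steps demanding care are the verification that $\dim(\pi_1\otimes\cdots\otimes\pi_d)=2^d$ for every admissible tuple of pairs---which is where the affine, rather than linear, choice of vertex classes matters, as it rules out proportional pairs and the low-rank degeneracies they would create---and the confirmation that every copy is genuinely captured by a line-tuple on which $\Phi$ vanishes identically; the surrounding estimates are routine first-moment computations.
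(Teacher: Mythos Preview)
Your proof is correct and follows essentially the same approach as the paper's: random multilinear maps together with the key clustering observation that every copy of $K_{2,\ldots,2}^{(d)}$ extends to a full $K_{q,\ldots,q}^{(d)}$ on a product of affine lines, so that deleting block by block rather than copy by copy buys exactly the extra factor $q^{-d}$ needed to turn $ds<(2^d-1)r$ into $d(s-1)<(2^d-1)r$. The only cosmetic difference is that the paper works directly in $V=\F_q^s$ with edge condition $T_i=1$ (which by multilinearity already rules out proportional pairs), whereas you embed into an affine chart of $\F_q^{s+1}$ and use the condition $\Phi=0$; both devices serve the same purpose of guaranteeing that any two vertices in a part are linearly independent.
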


This not only improves the lower bound for the 
box problem provided by Theorem \ref{GRS} for any $d$ which is not a power of $2$, but it also yields a gain over the probabilistic deletion bound \eqref{deletion} for {\textit{all}} uniformities $d$. To see this, note that if $d \geq 2$, then $d$ never divides $2^{d}-1$, so we may set $r = 1$ and $s = \lceil \frac{2^d-1}{d}\rceil > \frac{2^d-1}{d}$.

\begin{corollary} \label{new}
For any $d \geq 2$,
$$
{\rm ex}_d(n, K_{2, \ldots, 2}^{(d)}) = \Omega\left( n^{d - \lceil \frac{2^d-1}{d}\rceil^{-1}} \right).
$$
\end{corollary}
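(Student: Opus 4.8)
The plan is to obtain Corollary~\ref{new} as an immediate specialization of Theorem~\ref{main}. Since the exponent in Theorem~\ref{main} is $d - r/s$ and the exponent we are after is $d - \lceil (2^d-1)/d\rceil^{-1}$, the natural move is to take $r = 1$, so that the exponent becomes $d - 1/s$, and then to choose $s$ as large as the hypothesis of Theorem~\ref{main} allows, since enlarging $s$ only shrinks $1/s$ and strengthens the bound. With $r = 1$ the hypothesis $d(s-1) < (2^d-1)r$ reads $d(s-1) < 2^d - 1$, i.e.\ $s - 1 < (2^d-1)/d$, and the largest integer $s$ satisfying this is precisely $s = \lceil (2^d-1)/d \rceil$.

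First I would verify that this $s$ really does satisfy the hypothesis. Writing $x = (2^d-1)/d$, the value $s = \lceil x \rceil$ is by definition the least integer that is at least $x$, so $\lceil x \rceil < x + 1$ and therefore $s - 1 < x$; multiplying by $d$ gives $d(s-1) < 2^d - 1 = (2^d-1)r$, exactly as required. Note that this inequality is valid whether or not $x$ happens to be an integer, so no case distinction is needed. Applying Theorem~\ref{main} with these values of $r$ and $s$ then yields $\operatorname{ex}_d(n, K_{2,\ldots,2}^{(d)}) = \Omega(n^{d - 1/s}) = \Omega(n^{d - \lceil (2^d-1)/d\rceil^{-1}})$, which is the claim.

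I expect the verification above to be completely routine; there is no real obstacle in deducing the stated $\Omega$-bound from Theorem~\ref{main}. The one point that genuinely requires an argument --- and which I would flag as the substantive content behind the corollary --- is the implicit claim in the surrounding discussion that this bound strictly improves on the deletion bound~\eqref{deletion} for \emph{every} $d \ge 2$. That improvement is equivalent to $1/s < d/(2^d-1)$, i.e.\ to the strict inequality $s > (2^d-1)/d$, which holds if and only if $(2^d-1)/d$ is not an integer, that is, if and only if $d \nmid 2^d - 1$. I would establish this elementary number-theoretic fact by the standard order argument: if $d \ge 2$ and $d \mid 2^d - 1$, let $p$ be the smallest prime divisor of $d$; then $p$ is odd and the multiplicative order of $2$ modulo $p$ divides both $d$ and $p - 1$, hence divides $\gcd(d, p-1) = 1$ (as every prime factor of $d$ is at least $p$ while every prime factor of $p-1$ is smaller than $p$), forcing $2 \equiv 1 \pmod p$ and the contradiction $p \mid 1$.
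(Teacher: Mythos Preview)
Your proposal is correct and matches the paper's approach exactly: set $r=1$, $s=\lceil (2^d-1)/d\rceil$, and invoke Theorem~\ref{main}. You go slightly beyond the paper by (a) noting that the hypothesis $d(s-1)<2^d-1$ holds for this $s$ regardless of whether $(2^d-1)/d$ is an integer, so the corollary as stated needs no number theory, and (b) supplying the standard order-of-$2$ argument for $d\nmid 2^d-1$, which the paper asserts without proof and which is only needed for the separate claim that the bound strictly beats deletion.
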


For the reader's convenience, we include below a table comparing the bounds provided by the deletion bound \eqref{deletion}, by Gunderson, R\"odl and Sidorenko's Theorem~\ref{GRS} and by our Corollary~\ref{new}. A number $\alpha$ on the $d^{\textrm{th}}$ row of the table means that the corresponding method gives the lower bound
$$
{\rm ex}_d(n, K_{2, \ldots, 2}) = \Omega\left(n^{d - 1/\alpha}\right),
$$
while an empty cell in the GRS column means that the method does not apply for that value of $d$. In particular, we note that our method recovers both the fact that $\operatorname{ex}(n,K_{2,2})=\Theta(n^{3/2})$ and the lower bound $\operatorname{ex}_{3}(n,K_{2,2,2}^{(3)}) =\Omega(n^{8/3})$ of Katz, Krop and Maggioni.

\begin{center}
\begin{tabular}{ |r|r|r|r| } 
 \hline
 $d$ & Deletion & GRS & Corollary \ref{new}\\
 \hline
 2  &  1.50  &  2.00  &  2.00 \\
3  &  2.33  &  2.50  &  3.00 \\
4  &  3.75  &  4.00 &  4.00 \\
5  &  6.20  &  6.25  &  7.00 \\
6  &  10.50  & &  11.00 \\
7  &  18.14  &  18.16  &  19.00 \\
8  &  31.87  &  32.00  &  32.00 \\
9  &  56.77  &  56.80  &  57.00 \\
10  &  102.30  &  102.33  &  103.00 \\
11  &  186.09  &  186.10  &  187.00 \\
12  &  341.25  & &  342.00 \\
13  &  630.07  &  630.08  &  631.00 \\
14  &  1170.21  &  1170.22  &  1171.00 \\
15  &  2184.46  &  2184.50  &  2185.00 \\
16  &  4095.93  &  4096.00  &  4096.00 \\
17  &  7710.05  &  7710.06  &  7711.00 \\
18  &  14563.50  & &  14564.00 \\
19  &  27594.05  &  27594.05  &  27595.00 \\
20  &  52428.75  & &  52429.00 \\
21  &  99864.33  & &  99865.00 \\
22  &  190650.13  &  190650.14  &  190651.00 \\
 \hline
\end{tabular}
\end{center}

By a result of Ferber, McKinley and Samotij~\cite[Theorem 9]{FMS20}, any polynomial gain over the deletion lower bound for the extremal number of a uniform hypergraph $\mathcal H$ implies an optimal counting result for the number of $\mathcal H$-free graphs on $n$ vertices. In combination with Corollary~\ref{new}, this implies the following result, generalizing a celebrated theorem of Kleitman and Winston~\cite{KW82} on the $d = 2$ case.

\begin{corollary} \label{counting}
For any $d \geq 2$, let $\mathcal{F}_{n}\left(K_{2,\ldots,2}^{(d)}\right)$ be the set of all (labeled) $K_{2,\ldots,2}^{(d)}$-free $d$-uniform hypergraphs with vertex set $\left\{1,\ldots,n\right\}$. Then there exists a positive constant $C$ depending only on $d$ and an infinite sequence of positive integers $n$ for which
$$\left|\mathcal{F}_{n}\left(K_{2,\ldots,2}^{(d)}\right)\right| \leq 2^{C \cdot {\rm ex}_d(n, K_{2, \ldots, 2}^{(d)})}.$$
\end{corollary}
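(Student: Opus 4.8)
The plan is to read Corollary~\ref{counting} as a direct consequence of combining our new lower bound (Corollary~\ref{new}) with the general transference principle of Ferber, McKinley and Samotij~\cite[Theorem 9]{FMS20}, so that essentially no new combinatorics is required beyond checking that the hypotheses of their theorem are met. First I would recall the statement of their result in the form we need: it asserts that if $\mathcal{H}$ is a $d$-uniform hypergraph for which the extremal number exceeds the probabilistic deletion bound by a polynomial factor --- that is, $\operatorname{ex}_d(n,\mathcal{H}) = \Omega(n^{d - d/(2^d-1) + \varepsilon})$ for some fixed $\varepsilon > 0$ (along an infinite sequence of $n$) --- then the number of $\mathcal{H}$-free $d$-uniform hypergraphs on $\{1,\ldots,n\}$ is at most $2^{C \cdot \operatorname{ex}_d(n,\mathcal{H})}$ for a constant $C = C(d)$ and the same sequence of $n$.

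The key step, then, is to verify that our Corollary~\ref{new} supplies precisely this polynomial gain for $\mathcal{H} = K_{2,\ldots,2}^{(d)}$. Corollary~\ref{new} gives the exponent $d - \lceil (2^d-1)/d \rceil^{-1}$, while the deletion bound~\eqref{deletion} gives $d - d/(2^d-1)$. I would compare the two subtracted quantities: the gain is
$$
\frac{d}{2^d-1} - \frac{1}{\lceil (2^d-1)/d \rceil}.
$$
Since $d$ never divides $2^d-1$ for $d \geq 2$, the ceiling strictly exceeds $(2^d-1)/d$, so $\lceil (2^d-1)/d \rceil^{-1} < d/(2^d-1)$, making the displayed difference a fixed positive number $\varepsilon = \varepsilon(d) > 0$. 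This is exactly the polynomial improvement their theorem requires, so their result applies.

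With the hypothesis verified, the conclusion of~\cite[Theorem 9]{FMS20} is the bound $|\mathcal{F}_n(K_{2,\ldots,2}^{(d)})| \leq 2^{C \cdot \operatorname{ex}_d(n,K_{2,\ldots,2}^{(d)})}$ along the infinite sequence of $n$ for which Corollary~\ref{new} holds, which is precisely the statement of Corollary~\ref{counting}; I would note that the matching lower bound $|\mathcal{F}_n| \geq 2^{\operatorname{ex}_d(n,\mathcal{H})}$ is trivial, since every subhypergraph of a fixed extremal $K_{2,\ldots,2}^{(d)}$-free hypergraph is itself $K_{2,\ldots,2}^{(d)}$-free, so the counting result is sharp up to the constant in the exponent. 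The only point requiring genuine care --- and hence the main obstacle --- is bookkeeping the quantifiers: Corollary~\ref{new} and thus the polynomial gain hold only along an infinite sequence of admissible $n$ (those for which our construction is available), so the transference output must be stated along that same sequence, which is exactly why Corollary~\ref{counting} is phrased with "an infinite sequence of positive integers $n$" rather than for all $n$.
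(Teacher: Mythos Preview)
Your proposal is correct and matches the paper's approach exactly: the paper does not give a separate proof of Corollary~\ref{counting} but simply states, in the sentence preceding it, that it follows by combining Corollary~\ref{new} with \cite[Theorem~9]{FMS20}, since the latter converts any polynomial gain over the deletion bound into the optimal counting result. Your write-up is in fact more detailed than the paper's, explicitly checking that the gain $\tfrac{d}{2^d-1} - \lceil (2^d-1)/d\rceil^{-1}$ is strictly positive (using that $d \nmid 2^d-1$) and carefully tracking the ``infinite sequence of $n$'' quantifier.
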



\section{New lower bounds for the Erd\H{o}s box problem}

\subsection{Linear algebra preliminaries}

Let $V_1, \ldots, V_d$ be finite-dimensional vector spaces over the field $\F_q$. Following standard convention, we call a function $T: V_1 \times \cdots \times V_d \rightarrow \F_q$ {\it multilinear} if, for every $i \in \left\{1,\ldots,d\right\}$ and every fixed choice of $x_{j} \in V_{j}$ for each $j \neq i$, the function $T\left(x_{1},\ldots,x_{i-1},x,x_{i+1},\ldots,x_{d}\right)$, considered as a function on $V_{i}$, is linear over $\mathbb{F}_{q}$. 

The vector space of all multilinear functions $T: V_1 \times \cdots \times V_d \rightarrow \F_q$ can be naturally identified with the space $V_1^* \otimes \cdots \otimes V_d^*$, where $V^*$ denotes the dual space of $V$. A {\it uniformly random} multilinear function $T: V_1 \times \cdots \times V_d \rightarrow \F_q$ is then a random element of the space $V_1^* \otimes \cdots \otimes V_d^*$, chosen according to the uniform distribution. 

If, for each $i$, we have a subspace $U_i \subset V_i$, then we can define a restriction map
$$
r: V_1^* \otimes \cdots \otimes V_d^* \rightarrow U_1^* \otimes \cdots \otimes U_d^*.
$$

We have the following simple, but important, claim about these restriction maps.

\begin{claim}\label{triv}
The restriction $r(T)$ of a uniformly random multilinear function $T$ is again uniformly random.
\end{claim}

\begin{proof} 
The map $r$ is linear and surjective and so all $T' \in U_1^* \otimes \cdots \otimes U_d^*$ have the same number of preimages in $V_1^* \otimes \cdots \otimes V_d^*$.
\end{proof}

It will also be useful to note the following simple consequence of multilinearity.

\begin{proposition}\label{bra}
Suppose that $T: V_1 \times \cdots \times V_d \rightarrow \F_q$ is multilinear and, for every $i = 1, \ldots, d$, there are vectors $v_i^0, v_i^1 \in V_i$ such that
$$
T(v_1^{\varepsilon_1}, \ldots, v_d^{\varepsilon_d}) = 1
$$
for all $2^d$ choices of $\varepsilon_i \in \{0, 1\}$. Then, for any $u_i$ which lie in the affine hull of $v_i^0, v_i^1$ for each $i = 1, \ldots, d$,
$$
T(u_1, \ldots, u_d) = 1.
$$
\end{proposition}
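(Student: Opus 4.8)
The plan is to reduce everything to the defining property of the $v_i^\varepsilon$ by substituting the affine combinations one coordinate at a time, exploiting the fact that the coefficients of an affine combination sum to $1$. First I would record the basic description of the affine hull: a vector $u_i$ lies in the affine hull of $v_i^0, v_i^1$ precisely when it can be written as $u_i = \alpha_i v_i^0 + \beta_i v_i^1$ with $\alpha_i + \beta_i = 1$ in $\F_q$.

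The core of the argument is an induction on the number $k$ of coordinates that have been replaced by affine combinations. For each $0 \le k \le d$, I would prove the statement that for all $u_1, \ldots, u_k$ in the respective affine hulls and all $\varepsilon_{k+1}, \ldots, \varepsilon_d \in \{0,1\}$,
$$T(u_1, \ldots, u_k, v_{k+1}^{\varepsilon_{k+1}}, \ldots, v_d^{\varepsilon_d}) = 1.$$
The case $k = 0$ is exactly the hypothesis of the proposition, and the case $k = d$ is the desired conclusion.

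For the inductive step, I would fix $u_1, \ldots, u_k$ together with $\varepsilon_{k+1}, \ldots, \varepsilon_d$ and write $u_k = \alpha_k v_k^0 + \beta_k v_k^1$ with $\alpha_k + \beta_k = 1$. Linearity of $T$ in its $k$-th argument expands the left-hand side as
$$\alpha_k\, T(u_1, \ldots, u_{k-1}, v_k^0, v_{k+1}^{\varepsilon_{k+1}}, \ldots, v_d^{\varepsilon_d}) + \beta_k\, T(u_1, \ldots, u_{k-1}, v_k^1, v_{k+1}^{\varepsilon_{k+1}}, \ldots, v_d^{\varepsilon_d}),$$
and each of the two $T$-values equals $1$ by the inductive hypothesis, applied with $\varepsilon_k = 0$ and $\varepsilon_k = 1$ respectively. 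Hence the whole expression equals $(\alpha_k + \beta_k)\cdot 1 = 1$, which closes the induction.

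I do not expect a serious obstacle here; the only points requiring care are the bookkeeping of which coordinates currently carry affine combinations versus which still carry vectors of the form $v^\varepsilon$, and the crucial appeal to $\alpha_k + \beta_k = 1$. This last identity is precisely why the hypothesis involves the \emph{affine} hull rather than the linear span: over the span the coefficients need not sum to $1$, and the common value $1$ would not be preserved.
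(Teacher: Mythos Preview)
Your proof is correct and uses essentially the same idea as the paper: expand via multilinearity and use that affine coefficients sum to $1$. The only difference is presentational---the paper expands all $d$ coordinates at once to obtain $T(u_1,\ldots,u_d)=\sum_{\varepsilon}\alpha_1^{\varepsilon_1}\cdots\alpha_d^{\varepsilon_d}=\prod_i(\alpha_i^0+\alpha_i^1)=1$, whereas you unfold the same computation one coordinate at a time by induction.
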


\begin{proof} Write $u_i = \alpha_i^0 v_i^0 + \alpha_i^1 v_i^1$ for some $\alpha_i^0 + \alpha_i^1 = 1$. Then, by multilinearity, we have
\begin{align*}
T(u_1, \ldots, u_d) &= \sum_{\varepsilon_1, \ldots, \varepsilon_d \in \{0, 1\}} \alpha_1^{\varepsilon_1} \cdots \alpha_d^{\varepsilon_d} T(v_1^{\varepsilon_1}, \ldots, v_d^{\varepsilon_d})\\
&= \sum_{\varepsilon_1, \ldots, \varepsilon_d \in \{0, 1\}} \alpha_1^{\varepsilon_1} \cdots \alpha_d^{\varepsilon_d} = (\alpha_1^0 + \alpha_1^1) \cdots (\alpha_d^0 + \alpha_d^1) = 1,
\end{align*}
as required.
\end{proof}

\subsection{Proof of Theorem \ref{main}}

Fix positive integers $d$, $r$ and $s$ and let $q$ be a large prime power. Let $V = \F_q^s$ and
let $T_1, \ldots, T_r \in V^{*\otimes d}$ be independent uniformly random multilinear functions. Let $\mathcal{H}$ be the $d$-partite $d$-uniform hypergraph between $d$ copies of $V$ whose edge set $\mathcal{E}$ consists of all tuples $(v_1, \ldots, v_d) \in V^d$ such that $T_i(v_1, \ldots, v_d) = 1$ for all $i = 1, \ldots, r$. Let us estimate the expected number of edges in $\mathcal{H}$.

\begin{claim}\label{exp}
$\mathbb{E} \left[|\mathcal{E}|\right] = (q^s - 1)^d q^{-r} \sim q^{ds-r}$.
\end{claim}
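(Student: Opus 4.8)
The plan is to evaluate the expectation directly by linearity and then to understand, for a fixed tuple, the distribution of the values $T_i(v_1, \ldots, v_d)$. By linearity of expectation,
$$\mathbb{E}\left[|\mathcal{E}|\right] = \sum_{(v_1, \ldots, v_d) \in V^d} \Pr\left[T_i(v_1, \ldots, v_d) = 1 \text{ for all } i = 1, \ldots, r\right],$$
and, since $T_1, \ldots, T_r$ are independent, each summand factors as $\prod_{i=1}^r \Pr[T_i(v_1, \ldots, v_d) = 1]$. It therefore suffices to compute $\Pr[T(v_1, \ldots, v_d) = 1]$ for a single uniformly random $T$ and each fixed tuple.

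First I would dispose of the degenerate tuples: if some $v_i = 0$, then multilinearity forces $T(v_1, \ldots, v_d) = 0$ for every $T$, so such a tuple contributes nothing to the sum. The heart of the computation is thus the case where all $v_i \neq 0$. Here I would set $U_i = \mathrm{span}(v_i)$, a one-dimensional subspace of $V$, and invoke Claim \ref{triv}: the restriction $r(T) \in U_1^* \otimes \cdots \otimes U_d^*$ is again uniformly random. Since $T(v_1, \ldots, v_d)$ depends only on $r(T)$ — indeed it equals $r(T)$ evaluated at $(v_1, \ldots, v_d)$ — and since $U_1^* \otimes \cdots \otimes U_d^*$ is one-dimensional with $(v_1, \ldots, v_d)$ a nonzero evaluation point, the value $T(v_1, \ldots, v_d)$ is uniformly distributed over $\F_q$. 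Consequently $\Pr[T(v_1, \ldots, v_d) = 1] = q^{-1}$, and each nondegenerate tuple contributes $q^{-r}$.

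Finally, I would count: the number of tuples $(v_1, \ldots, v_d) \in V^d$ with every $v_i \neq 0$ is exactly $(q^s - 1)^d$, since $|V| = q^s$. Combining the two cases gives $\mathbb{E}[|\mathcal{E}|] = (q^s - 1)^d q^{-r}$, and rewriting this as $q^{ds - r}(1 - q^{-s})^d$ shows that it is asymptotic to $q^{ds - r}$ as $q \to \infty$. The only step requiring genuine care is the assertion that $T(v_1, \ldots, v_d)$ is uniform on $\F_q$ for nonzero $v_i$; the reduction to the one-dimensional case via Claim \ref{triv} makes this transparent, though one could equally argue that $T \mapsto T(v_1, \ldots, v_d)$ is a linear functional on $V^{*\otimes d}$ that is surjective onto $\F_q$ precisely when $v_1 \otimes \cdots \otimes v_d \neq 0$.
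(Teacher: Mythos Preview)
Your proof is correct and follows essentially the same approach as the paper: dispose of tuples with a zero coordinate by multilinearity, then for nondegenerate tuples restrict to the one-dimensional spans $U_i = \langle v_i\rangle$ via Claim~\ref{triv} and use that $U_1^*\otimes\cdots\otimes U_d^*$ is one-dimensional to conclude uniformity of the value, before invoking independence of the $T_i$. Your alternative remark about the surjective linear functional is a pleasant extra observation but not needed.
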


\begin{proof} Note that if one of $v_1, \ldots, v_d$ is zero, then $T_i(v_1, \ldots, v_d) = 0$, so we may assume that $(v_1, \ldots, v_d)$ is one of the $(q^s - 1)^d$ remaining sequences of non-zero vectors and calculate the probability that it belongs to $\mathcal E$. Let $U_i = \langle v_i \rangle \subset V$, a one-dimensional subspace of $V$. By Claim \ref{triv}, the restriction $T'_i$ of $T_i$ to $U_1 \times \cdots \times U_d$ is uniformly distributed in $U_1^* \otimes \cdots \otimes U_d^*$. But the latter space is one-dimensional and so $T'_i(v_1, \ldots, v_d)$ takes the value $1$ with probability $q^{-1}$. Since $T_1, \ldots, T_r$ are independent, the functions $T'_1, \ldots, T'_r$ are independent, so they all are equal to one at $(v_1, \ldots, v_d)$ with probability exactly $q^{-r}$.  
\end{proof}

We now estimate the expected number of (appropriately ordered) copies of $K_{2,\ldots,2}^{(d)}$ in $\mathcal{H}$.

\begin{claim}\label{cl}
Let $\mathcal F$ denote the family of all $(v_1^0, v_1^1, \ldots, v_d^0, v_d^1) \in V^{2d}$ where $v_j^0 \neq v_j^1$ for all $j$ and $T_i(v_1^{\varepsilon_1}, \ldots, v_d^{\varepsilon_d}) = 1$ for all $i = 1, \ldots, r$ and all choices of $\varepsilon_1, \ldots, \varepsilon_d \in \{0, 1\}$. Then $\E[|\mathcal F|] \sim q^{2ds - 2^d r}$.
\end{claim}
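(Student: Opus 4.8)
The plan is to compute $\E[|\mathcal F|]$ by linearity of expectation, mirroring the proof of Claim~\ref{exp} but working with two-dimensional subspaces rather than lines. Writing the expectation as a sum over all tuples $(v_1^0, v_1^1, \ldots, v_d^0, v_d^1)$ with $v_j^0 \neq v_j^1$ for each $j$, the task reduces to computing, for a fixed such tuple, the probability $p$ that $T_i(v_1^{\varepsilon_1}, \ldots, v_d^{\varepsilon_d}) = 1$ for all $i = 1, \ldots, r$ and all $\varepsilon \in \{0,1\}^d$. I would split the tuples according to whether each pair $v_j^0, v_j^1$ is linearly independent.

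For the main case, suppose every pair $v_j^0, v_j^1$ is linearly independent and set $U_j = \langle v_j^0, v_j^1\rangle$, a two-dimensional subspace of $V$. By Claim~\ref{triv}, the restriction $T_i'$ of $T_i$ to $U_1 \times \cdots \times U_d$ is uniformly distributed in $U_1^* \otimes \cdots \otimes U_d^*$, a space of dimension $2^d$. Since $\{v_j^0, v_j^1\}$ is a basis of $U_j$, the $2^d$ products $v_1^{\varepsilon_1} \otimes \cdots \otimes v_d^{\varepsilon_d}$ form a basis of $U_1 \otimes \cdots \otimes U_d$, and the $2^d$ evaluations $T_i'(v_1^{\varepsilon_1}, \ldots, v_d^{\varepsilon_d})$ are precisely the coordinates of $T_i'$ in the dual basis. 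Hence these evaluations are jointly uniform over $\F_q^{2^d}$, so they all equal $1$ with probability $q^{-2^d}$. As $T_1, \ldots, T_r$ are independent, we get $p = q^{-2^d r}$.

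I would then argue that every degenerate tuple contributes nothing. If some $v_j^{\varepsilon_j} = 0$, then the corresponding evaluation is forced to equal $0 \neq 1$, so $p = 0$. If instead $v_j^0, v_j^1$ are both nonzero but proportional, say $v_j^1 = \lambda v_j^0$ with $\lambda \neq 0, 1$, then multilinearity forces $T_i(\ldots, v_j^1, \ldots) = \lambda\, T_i(\ldots, v_j^0, \ldots)$ for the two box-evaluations differing only in the $j$-th coordinate; these cannot both equal $1$ since $\lambda \neq 1$, so again $p = 0$ for every single $T_i$. Consequently the only tuples contributing to the expectation are those with all pairs linearly independent, of which there are exactly $[(q^s-1)(q^s-q)]^d$.

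Putting this together gives $\E[|\mathcal F|] = [(q^s-1)(q^s-q)]^d\, q^{-2^d r}$, which for $s \geq 2$ is asymptotic to $q^{2ds - 2^d r}$ as $q \to \infty$, as claimed. The one point requiring genuine care is the probability computation in the independent case: the key observation is that passing to the two-dimensional subspaces $U_j$ converts the $2^d$ box constraints on a single $T_i$ into reading off all $2^d$ coordinates of a uniformly random element of the $2^d$-dimensional space $U_1^* \otimes \cdots \otimes U_d^*$, so that they behave like $2^d$ independent uniform values in $\F_q$. Everything else is bookkeeping: the degenerate tuples vanish identically, and the leading-order count of nondegenerate tuples is $q^{2ds}(1 + o(1))$.
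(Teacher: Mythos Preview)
Your proof is correct and follows essentially the same approach as the paper: restrict each $T_i$ to the product of the two-dimensional spans $U_j = \langle v_j^0, v_j^1\rangle$, invoke Claim~\ref{triv} to get uniformity, use that the $2^d$ tensors $v_1^{\varepsilon_1}\otimes\cdots\otimes v_d^{\varepsilon_d}$ form a basis to conclude the per-$T_i$ probability is $q^{-2^d}$, and dispose of collinear pairs by the multilinearity observation. The resulting exact count $\E[|\mathcal F|] = (q^s-1)^d(q^s-q)^d q^{-2^d r}$ matches the paper's.
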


\begin{proof} If, for some $j = 1, \ldots, d$, the vectors $v_j^0$ and $v_j^1$ are collinear, say $v_j^1 = \lambda v_j^0$ for some $\lambda \neq 1$ (but allowing $\lambda = 0$), then 
$$
T(v_1^0, \ldots, v_j^1, \ldots, v_d^0) = \lambda T(v_1^0, \ldots, v_j^0, \ldots, v_d^0),
$$
so these two numbers cannot be equal to 1 simultaneously. Therefore, we may restrict attention to only those tuples where $v_j^0$ and $v_j^1$ are linearly independent for all $j = 1, \ldots, d$. 

Fix one of the $(q^s - 1)^d(q^s - q)^d$ remaining tuples $\bar v = (v_1^0, v_1^1, \ldots, v_d^0, v_d^1)$ and let us compute the probability that $\bar v \in \mathcal F$. Let $U_j = \langle v_j^0, v_j^1\rangle$ be the two-dimensional vector space spanned by $v_j^0$ and $v_j^1$. By Claim \ref{triv}, the restriction $T'_i$ of $T_i$ to $U_1 \times \cdots \times U_d$ is uniformly distributed in $U_1^* \otimes \cdots \otimes U_d^*$. Moreover, the independence of $T_1, \ldots, T_r$ implies that $T'_1, \ldots, T'_r$ are also independent. Now observe that the set of $2^d$ tensors
$$
\{v_1^{\varepsilon_1} \otimes \cdots \otimes v_d^{\varepsilon_d} :~\varepsilon_j \in \{0, 1\}\}
$$
forms a basis for the space $U_1 \otimes \cdots \otimes U_d$. Therefore, there exists a unique $R \in U_1^* \otimes \cdots \otimes U_d^*$ such that $R(v_1^{\varepsilon_1}, \ldots, v_d^{\varepsilon_d}) = 1$ for all $\varepsilon_j \in \{0, 1\}$. Moreover, since there are $q^{2^d}$ different choices for the value of a function in $U_1^* \otimes \cdots \otimes U_d^*$ at the $(v_1^{\varepsilon_1}, \ldots, v_d^{\varepsilon_d})$ and each such choice determines a unique function, the probability that $T'_i = R$ is $q^{-2^d}$. Since $\bar v \in \mathcal F$ if and only if $T'_i = R$ for all $i = 1, \ldots, r$, the independence of the $T'_i$ implies that the probability $\bar v \in \mathcal F$ is $q^{-2^d r}$. Thus, 
$$
\E[|\mathcal F|] = (q^s - 1)^d(q^s - q)^d q^{-2^d r} \sim q^{2ds - 2^d r},
$$
as required.
\end{proof}

The next step is crucial.

\begin{lemma}\label{crux}
Let $\mathcal B$ be the family of all $(v_1, \ldots, v_d) \in \mathcal E$ for which there exists $(v'_1, \ldots, v'_d) \in V^d$ such that $(v_1, v'_1, \ldots, v_d, v'_d) \in \mathcal F$. Then 
$$
\E[|\mathcal B|] \le (1+o(1)) q^{-d}\E[|\mathcal F|].
$$
\end{lemma}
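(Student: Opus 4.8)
The plan is to prove the pointwise inequality $|\mathcal B| \le (q-1)^{-d}|\mathcal F|$ for \emph{every} realization of $T_1, \ldots, T_r$; the lemma then follows immediately by taking expectations, since $(q-1)^{-d} = (1+o(1))q^{-d}$ as $q \to \infty$ for fixed $d$. In this approach the randomness plays no role beyond the final line, and the real content is a deterministic counting statement about $\mathcal E$ and $\mathcal F$.

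For the counting, I would assign to each edge $e = (v_1, \ldots, v_d) \in \mathcal E$ the weight $N(e)$ equal to the number of tuples $(v_1', \ldots, v_d') \in V^d$ with $v_j' \ne v_j$ for all $j$ such that $(v_1, v_1', \ldots, v_d, v_d') \in \mathcal F$. Identifying each element of $\mathcal F$ with the pair consisting of its ``$0$-corner'' $(v_1^0, \ldots, v_d^0)$ --- which is automatically an edge of $\mathcal E$, since all $2^d$ values $T_i(v_1^{\varepsilon_1}, \ldots, v_d^{\varepsilon_d})$ equal $1$ --- together with its extension $(v_1^1, \ldots, v_d^1)$ gives the identity $\sum_{e \in \mathcal E} N(e) = |\mathcal F|$. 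By definition $\mathcal B = \{e \in \mathcal E : N(e) \ge 1\}$, so once I know that $N(e) \ge (q-1)^d$ for every $e \in \mathcal B$, summing over $\mathcal B$ yields $(q-1)^d |\mathcal B| \le \sum_{e \in \mathcal B} N(e) \le |\mathcal F|$, which is exactly the desired pointwise bound.

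The heart of the argument is therefore the lower bound $N(e) \ge (q-1)^d$ for $e \in \mathcal B$, and this is where Proposition \ref{bra} enters. Given $e = (v_1, \ldots, v_d) \in \mathcal B$, I would fix one witnessing extension $(v_1', \ldots, v_d')$, so that every $T_i$ takes the value $1$ at all $2^d$ corners of the box with sides $\{v_j, v_j'\}$. Proposition \ref{bra} upgrades this to the statement that each $T_i$ is identically $1$ on the whole affine box $\prod_{j=1}^d \mathrm{aff}(v_j, v_j')$. Since $v_j \ne v_j'$, each affine line $\mathrm{aff}(v_j, v_j')$ contains exactly $q$ points, and for any choice $w_j \in \mathrm{aff}(v_j, v_j') \setminus \{v_j\}$ the tuple $(v_1, w_1, \ldots, v_d, w_d)$ lies in $\mathcal F$: its $2^d$ corners all sit inside the affine box and so are edges, while $w_j \ne v_j$ for every $j$. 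These $(q-1)^d$ tuples $(w_1, \ldots, w_d)$ are distinct extensions of $e$, so $N(e) \ge (q-1)^d$.

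The \textbf{main obstacle} is conceptual rather than computational: one must see that a single copy of $K_{2,\ldots,2}^{(d)}$ forces, through multilinearity, an entire $q^d$-point affine box of edges, every point of which can itself serve as the $0$-corner of a copy through $e$. This is the structural dividend that the multilinear construction pays over the plain deletion method, and it is precisely what improves the trivial bound $|\mathcal B| \le |\mathcal F|$ (one edge per copy) to the decisive saving $|\mathcal B| \lesssim q^{-d}|\mathcal F|$. Everything else --- the double-counting identity and the replacement of $(q-1)^{-d}$ by $(1+o(1))q^{-d}$ --- is routine.
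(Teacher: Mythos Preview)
Your proposal is correct and follows essentially the same line as the paper's proof: both establish the pointwise inequality $|\mathcal B| \le (q-1)^{-d}|\mathcal F|$ by using Proposition~\ref{bra} to show that each bad edge is the $0$-corner of at least $(q-1)^d$ elements of $\mathcal F$, and then take expectations. The only cosmetic difference is that the paper phrases the count via a partition of $\mathcal F$ into blocks $P(l_1,\ldots,l_d)$ indexed by tuples of affine lines, whereas you phrase it as a double count $\sum_{e\in\mathcal E}N(e)=|\mathcal F|$; the two viewpoints are equivalent.
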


\begin{proof} Given a sequence of affine lines $l_1, \ldots, l_d \subset V$, denote by $P(l_1, \ldots, l_d)$ the set of all sequences $(x_1, x'_1, \ldots, x_d, x'_d) \in V^{2d}$ such that $x_j$ and $x'_j$ are distinct and lie on $l_j$ for all $j$. Clearly,
$$
|P(l_1, \ldots, l_d)| = q^d (q-1)^d \sim q^{2d}.
$$
Note that:
\begin{enumerate}
    \item If $(l_1, \ldots, l_d) \neq (l'_1, \ldots, l'_d)$, then
    $P(l_1, \ldots, l_d) \cap P(l'_1, \ldots, l'_d) = \emptyset$, 
    since the lines $l_1, \ldots, l_d$ are uniquely determined by any member of $P(l_1, \ldots, l_d)$.
    \item If $P(l_1, \ldots, l_d) \cap \mathcal F \neq \emptyset$, then $P(l_1, \ldots, l_d) \subset \mathcal F$ by Proposition \ref{bra}.
    \item Any $\bar v \in \mathcal F$ is contained in $P(l_1, \ldots, l_d)$ for some $l_1, \ldots, l_d$.
\end{enumerate}

Denote the family of all tuples $(l_1, \ldots, l_d)$ such that $P(l_1, \ldots, l_d) \cap \mathcal F \neq \emptyset$ by $\mathcal L$. By the observations above, we have that 
$$
|\mathcal L| q^d (q-1)^d = |\mathcal F|.
$$
On the other hand, it is clear that
$$
\mathcal B = \bigcup_{(l_1, \ldots, l_d) \in \mathcal L} l_1 \times l_2 \times \ldots \times l_d,
$$
so that
$$
|\mathcal B| \le q^d |\mathcal L| = (q-1)^{-d} |\mathcal F|.
$$
Taking expectations, we obtain the required result.
\end{proof}

By definition, the subgraph $\mathcal{H}'$ of $\mathcal{H}$ with edge set $\mathcal E \setminus \mathcal B$ is $K_{2, \ldots, 2}$-free. 
By Lemma \ref{crux} and Claim \ref{cl}, 
$$
\E[|\mathcal B|] \le (1+o(1))q^{-d}\E[|\mathcal F|] = (1+o(1))q^{2ds-2^d r - d}.
$$
On the other hand, by Claim \ref{exp}, $\E[|\mathcal E|] \sim q^{ds - r}$. By the assumption on $r$ and $s$ from the statement of Theorem \ref{main}, we have 
$$
    2ds - 2^d r -d < ds - r,
$$
which immediately implies that $\E[|\mathcal B|] = o(\E[|\mathcal{E}|])$. Therefore, there must exist a $K_{2, \ldots, 2}^{(d)}$-free hypergraph $\mathcal H'$ on a ground set of size $n = d q^s$ with edge set $\mathcal{E}'$ satisfying
$$
|\mathcal E'| = (1+o(1)) q^{ds-r} = (c+o(1))n^{d - \frac{r}{s}},
$$
where $c = d^{\frac{r}{s} -d}$, completing the proof of Theorem \ref{main}.



\bibliographystyle{amsplain}


\begin{dajauthors}
\begin{authorinfo}[david]
  David Conlon\\
  Department of Mathematics\\
  California Institute of Technology\\
  Pasadena, USA\\
  dconlon@caltech.edu \\
  \url{http://www.its.caltech.edu/~dconlon/}
\end{authorinfo}
\begin{authorinfo}[cosmin]
  Cosmin Pohoata\\
  Department of Mathematics\\
  Yale University\\
  New Haven, USA\\
  andrei.pohoata@yale.edu\\
  \url{https://pohoatza.wordpress.com/}
\end{authorinfo}
\begin{authorinfo}[dima]
 Dmitriy Zakharov\\
  Laboratory of Combinatorial and Geometric Structures\\
  MIPT\\
  Moscow, Russia\\
 zakharov2k@gmail.com\\
  \url{https://combgeo.org/en/members/dmitriy-zakharov/}
\end{authorinfo}
\end{dajauthors}

\end{document}